\def\hpq0{h^{p,q}_{\leq 0}}
\def\Hpq0{\H_{\leq 0}^{p,q}}
\def\dbar{\bar\partial}
\def\ddbar{\partial\dbar}
\def\C{{\mathbb C}}
\def\D{\mathcal{D}}
\def\H{{\mathcal H}}
\def\Re{{\rm Re\,  }}
\def\be{\begin{equation}}
\def\ee{\end{equation}}
\newtheorem{thm}{Theorem}[section]
\newtheorem{lma}[thm]{Lemma}
\newtheorem{cor}[thm]{Corollary}
\newtheorem{prop}[thm]{Proposition}
\theoremstyle{definition}
\newtheorem{df}{Definition}
\theoremstyle{remark}
\newtheorem{preremark}{Remark}
\newtheorem{preex}{Example}
\numberwithin{equation}{section}
\title[]
{Lelong numbers and vector bundles.}
\address{Department of Mathematics\\Chalmers University
  of Technology \\
 S-412 96
  G\"OTEBORG\\SWEDEN} 
\email{ bob@chalmers.se}
\author[]{Bo Berndtsson}
\begin{document}

\begin{abstract}
We study Lelong numbers and integrability indices for $S^1$-invariant singular metrics on vector bundles over the disk.\end{abstract}

\maketitle
{\small \it Dedicated to the memory of G. M. Henkin.}
\section{Introduction.}

The notion of a singular metric on holomorphic line bundles has proved to be very useful in complex analysis and algebraic geometry. The analogous notion for holomorphic vector bundles, which appears in\cite{de Cataldo}, \cite{Berndtsson-Paun}, \cite{Raufi} and  \cite{Paun-Takayama}, see also  \cite{Hacon-Schnell} for a clear discussion,  has been much less studied. In this paper we will focus on a very particular situation: Vector bundles over the unit disk, $\Delta$,  with metrics that satisfy a condition of $S^1$-invariance. Although this is a very special situation we will show in two examples that such metrics occur naturally and can be useful in applications. In both these examples the principle idea is that one embeds a certain problem, or estimate, into a scale of problems depending on a real parameter $t$. Thinking of $t$ as the real part of a complex variable $\zeta$, the norms one wants to study can be seen as defining a hermitian metric on a (trivial) vector bundle of positive or negative curvature over a half plane, or equivalently, via a logarithmic map, the disk. Of particular interest is the behaviour of the norms as the parameter  tends to infinity, or the origin in the disk picture, where a singularity of the metric appears.  

As a bundle, $E$,  over the disk is necessarily trivial, we can write $E=V\times\Delta$, where $V$ is an $n$-dimensional complex vector space.   We  fix one such global trivialization, and it is should be stressed that our later discussion depends on the choice of trivialization. A singular metric on $E$ is then basically a measurable map $z\to \langle\cdot,\cdot\rangle_z$, where $\langle \cdot,\cdot\rangle_z$ is an hermitian form on the fiber $E_z=V$. We  want to allow that our quadratic forms are degenerate and also that they  may attain the value $+\infty$. To avoid a detailed discussion of what this means, we will here use the following definition of a singular metric.

\medskip

{\bf Definition:} A singular metric on the holomorphic vector bundle $E$ is a map from $\Delta$ to the space of positive definite quadratic forms on $V$, {\it which is defined almost everywhere}, and satisfies the condition that for any local holomorphic section $u$ of $E$, $\log\langle u,u\rangle=\log\|u\|^2$ is measurable and locally integrable.

\qed

(In practice, since we will discuss only metrics that have either positive or negative curvature, they will have much stronger regularity properties than just being measurable.)

Following the articles cited above we say that $h$ is negatively curved if for any holomorphic map $z\to u(z)\in V$, the norm function $\|u(z)\|^2_z$ is subharmonic as a function of $z$. We refer to \cite{Raufi}, for a detailed discussion of this. Here we just stress that the condition is equivalent to the seemingly stronger condition that $\log\|u(z)\|_z$ is subharmonic. (This follows since, if $\|u(z)\|^2_z$ is subharmonic for any choice of $u$, then $\|u e^{p(z)}\|_z^2=\|u\|^2_z e^{2\Re p(z)}$ is also subharmonic for any holomorphic function $p$, and this implies that $\log\|u\|_z$ is subharmonic.) We also say that our metric is positively  curved, if the induced metric on the dual bundle $E^*= V^*\times\Delta$ is negatively curved. It is a standard fact that for smooth and  strictly definite metrics, these notions mean that the curvature operators $\Theta^h=\dbar h^{-1}\partial h$ are positive. These definitions extend to bundles over a higher dimensional base.

One important feature of the definitions is that they define negativity and positivity of the curvature, without defining any sort of curvature form or current. Indeed, as shown by a pertinent example of \cite{Raufi}, the curvature of a singular metric, defined in a way analogous to how it is defined in the smooth case, will in general not be a current with measure coefficients, even if the metric has negative curvature. This is in stark contrast to the line bundle case ($n=1$), where $h=e^{-\phi}$ has the curvature current $\ddbar\phi$, which has measure coefficients if $h$ is positively or negatively curved, i e if $\phi$ is subharmonic or superharmonic. 

We will introduce and study a notion of Lelong number and integrability index for metrics on vector bundles. Recall that for a plurisubharmonic function $\phi$ in the ball of $\C^m$, the (classical) Lelong number of $\phi$ at the origin can be defined as 
$$
\gamma_\phi(0)=\liminf_{z\to 0}\phi(z)/\log|z|.
$$
On the other hand, the integrability index, $\iota_\phi(0)$, is defined as the infimum of all numbers $c>0$ such that 
$$
\int_0 e^{-2\phi/c} <\infty.
$$
As is well known, when $m=1$, the integrability index equals the Lelong number and this common value is also equal to $\mu(\{0\})$, where $\mu=(i/\pi)\ddbar\phi$, i e the 'curvature' of $ e^{-\phi}$ at the origin.

If $\phi$ is (pluri)subharmonic in $D$, $h=e^{-\phi}$ is a (singular) metric of positive curvature on the trivial line bundle $\C\times D$, whereas $e^\phi$ is a metric of negative curvature on the same (or actually dual)  bundle, dual to $e^{-\phi}$. The Lelong numbers are defined in terms of the asymptotic behaviour of $\log\|1\|^2_{e^\phi}$ as we approach zero, while the integrability index is defined in terms of $\|1\|^2_{e^{-\phi}}$. Similarily, our definition of Lelong numbers for vector bundles applies  to metrics of negative curvature, while the 'integrability indices' for vector bundles will be defined for metrics of positive curvature.

\begin{df} Let $h$ be a (singular) metric of negative curvature on the vector bundle $E=V\times \Delta$. Then the Lelong number at 0 of $h$ in the direction $u\in V$ is defined as
$$
\gamma_h(u,0):=\liminf_{z\to 0} \log\|u\|^2_z/\log |z|=\gamma_{\log\|u\|^2}(0).
$$
\end{df}

Later it will be more natural to consider the negative of the Lelong number, so we also put $\alpha(u)=-\gamma (u,0)$ (where we have suppressed the dependence on $h$). Thus,
$$
\alpha(u)=\limsup_{z\to 0}\log\|u\|^2_z/\log(1/|z|).
$$
Since for any norm $\|u_1+u_2\|\leq 2\max( \|u_1\|, \|u_2\|)$, we   see  that for any $\alpha$, 
$$
V_\alpha:=\{u\in V; \alpha(u)\leq \alpha\}
$$
is a linear  subspace of $V$.

Since $V$ has finite dimension $n$, the dimension  of the spaces $V_\alpha$ can only jump at (at most) $n$ places. In other words, there are numbers $\alpha_1\leq \alpha_2\leq ...\alpha_n$ such that 
$$
V_{\alpha_1}\subseteq V_{\alpha_2}\subseteq ... V_{\alpha_n}=V,
$$
and
if $\alpha_j\leq \alpha <\alpha_{j+1}$, then $V_\alpha=V_{\alpha_j}$, and $V_{\alpha_j}$ has dimension $j$. We also let $\alpha_0=-\infty$ and $V_{\alpha_0}=\{0\}$. 

This filtration of $V$ gives rise to a dual filtration of the dual space $V^*=:F$. Letting $ F_\alpha=V_\alpha^\perp$ be the space of vectors in $F$ that are annihilated by $V_\alpha$, we have 
$$
0=F_{\alpha_n}\subseteq ...F_{\alpha_2}\subseteq F_{\alpha_1}\subset F_{\alpha_0}=F.
$$

At this point we remark that if we did have a curvature form of the metric with measure coefficients, $\Theta$, then the Hermitian operator $\Theta(\{0\})$ would give us a decomposition of $V$ into eigenspaces. As we have seen, such a curvature operator does not always exist, but what we get instead  is a filtration of $V$.

The general idea is  that while the spaces $V_\alpha$ are defined in terms of Lelong numbers, the spaces $F_\alpha$ are characterized by (a version of ) the integrability index. We will now  try to make this precise under the additional assumption that our metric $h$ is $S^1$-invariant. By this we  mean that if $u\in V$, then $\|u\|_z =\|u\|_{z e^{i\theta}}$ for any $\theta$ in the circle $S^1$. 

Before we state our  result, we transfer our problem to the right half plane $\H=\{\Re \zeta >0\}$ via the exponential map $z= e^{-\zeta}$. Changing notation, we call $E=V\times\H$, and consider metrics on $E$ that only depend on  $t=\Re\zeta$. A metric on the bundle $E$ over $\H$ corresponds under the exponential map to a metric over the punctured disk, and in case the metric has negative curvature it extends to metric of negative curvature over the full disk if and only if the metric over the half plane is bounded as $t$ tends to infinity. It will be convenient to allow also a slightly more general situation. We say that a negatively curved metric over $\H$ has moderate growth (at infinity) if for some constants $a$ and $C$, and all $u$ in $V$, 
$$
\|u\|^2_t\leq C(u) e^{at}.
$$
Equivalently,  the metric has moderate growth if for some $a$, the corresponding metric over the punctured disk is such that $\|u\|^2_z |z|^a$ extends to a negatively curved metric over the full disk.

If $u$ is a vector in $V$, then $\log\|u\|^2_t$ is now a convex function, and the negative of our  Lelong numbers are
$$
\alpha(u)=\lim_{t\to\infty} (1/t)\log\|u\|^2_t.
$$
Notice that, by convexity, $(1/t)(\log\|u\|^2_t-\log\|u\|^2_0)$ is increasing, so the limit exists.

Our main result is as follows. 
\begin{thm} 
Let $\|\cdot\|_t$ be a metric of negative curvature and moderate growth on the bundle $E=V\times \H$ over $\H$, depending only on $t=\Re \zeta$ for $\zeta\in \H$. Let $\|\cdot\|_{-t}$ be the dual metric on $E^*=V^*\times\H$, and let $V_\alpha$, $F_\alpha$ be the filtrations of $V$ and $V^*=F$ described above. Let $\alpha_j\leq \alpha <\alpha_{j+1}$.  Then the following are equivalent for $v\in V^*$:

a.   $$v\in F_{\alpha_j},$$

b. $$\int_0^\infty \|v\|^2_{-t} e^{t\alpha} dt <\infty,$$

c. $$\limsup_{t\to\infty} (1/t)\log\|v\|^2_{-t}\leq -\alpha_{j+1}.$$
\end{thm}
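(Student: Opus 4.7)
The plan is to prove the three equivalences cyclically via (c) $\Rightarrow$ (b) $\Rightarrow$ (a) $\Rightarrow$ (c); the first two links are essentially automatic and only the last is substantive.

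For (c) $\Rightarrow$ (b), choose $\epsilon>0$ with $\epsilon<\alpha_{j+1}-\alpha$; then $\|v\|_{-t}^2\le Ce^{-t(\alpha_{j+1}-\epsilon)}$ for all large $t$, so $\|v\|_{-t}^2 e^{t\alpha}$ is integrable. For (b) $\Rightarrow$ (a), take $u\in V_{\alpha_j}$. The convexity of $t\mapsto\log\|u\|_t^2$ together with the limit $\lim_t(1/t)\log\|u\|_t^2=\alpha(u)\le\alpha_j\le\alpha$ forces the pointwise inequality $\log\|u\|_t^2\le\log\|u\|_0^2+t\alpha(u)$, so that $e^{t\alpha}/\|u\|_t^2\ge 1/\|u\|_0^2$ on $[0,\infty)$. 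Combining with $|v(u)|^2\le\|v\|_{-t}^2\|u\|_t^2$ yields
\[
|v(u)|^2\int_0^\infty\frac{e^{t\alpha}}{\|u\|_t^2}\,dt\le\int_0^\infty\|v\|_{-t}^2 e^{t\alpha}\,dt<\infty,
\]
and the left-hand integral diverges unless $v(u)=0$; hence $v$ annihilates $V_{\alpha_j}$, i.e.\ $v\in F_{\alpha_j}$.

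For the substantive direction (a) $\Rightarrow$ (c), I fix a basis $u_1,\ldots,u_n$ of $V$ adapted to the filtration (so $u_1,\ldots,u_k$ spans $V_{\alpha_k}$ and in particular $\alpha(u_k)=\alpha_k$), with dual basis $v_1,\ldots,v_n$ and Gram matrix $B_t=(\langle u_k,u_l\rangle_t)$. In this basis $F_{\alpha_j}=\mathrm{span}(v_{j+1},\ldots,v_n)$; writing $v=\sum_{k>j}c_kv_k$ and using that the operator norm of a positive-definite block is bounded by its trace,
\[
\|v\|_{-t}^2=\sum_{k,l>j}\bar c_kc_l(B_t^{-1})_{kl}\le\|c\|^2\sum_{k>j}(B_t^{-1})_{kk}.
\]
By Cramer's rule $(B_t^{-1})_{kk}=\det M_{kk}/\det B_t$, where $M_{kk}$ is the Gram matrix of $(u_l)_{l\neq k}$ inside the sub-bundle $W_k\subset V$ of dimension $n-1$; the restricted (negatively curved) metric on $W_k$ has Lelong spectrum $(\alpha_l)_{l\neq k}$, as one verifies by computing $W_k\cap V_{\alpha_m}$ directly. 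Granting the Key Lemma stated below, $(1/t)\log\det B_t\to\sum_l\alpha_l$ and $(1/t)\log\det M_{kk}\to\sum_{l\neq k}\alpha_l$, so $(1/t)\log(B_t^{-1})_{kk}\to-\alpha_k\le-\alpha_{j+1}$ for every $k>j$; the displayed inequality then yields (c).

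\textbf{Key Lemma.} For a negatively curved, $S^1$-invariant metric of moderate growth on $V\times\H$ with Lelong spectrum $\alpha_1\le\cdots\le\alpha_n$, one has $(1/t)\log\det B_t\to\alpha_1+\cdots+\alpha_n$. The upper bound $\le\sum\alpha_k$ is Hadamard combined with $\|u_k\|_t^2\le\|u_k\|_0^2 e^{t\alpha_k}$. The matching lower bound is the genuine obstacle; I plan to attack it by induction on $n$ via the Schur complement identity $\det B_t=\det B_t^{(n-1)}\cdot\|\bar u_n\|_{\mathrm{quot},t}^2$, where $B_t^{(n-1)}$ is the Gram matrix of $u_1,\ldots,u_{n-1}$ (so the inductive hypothesis applied to the sub-bundle $V_{\alpha_{n-1}}$ gives its rate as $\sum_{l<n}\alpha_l$) and $\|\bar u_n\|_{\mathrm{quot},t}^2=\inf_{w\in V_{\alpha_{n-1}}}\|u_n+w\|_t^2$ is the fiberwise quotient metric on the one-dimensional space $V/V_{\alpha_{n-1}}$. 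The only remaining step is to show that this quotient metric has exponential rate exactly $\alpha_n$. The upper bound is trivial ($w=0$); the lower bound is the delicate point, since the infimum of a family of log-convex functions need not itself be log-convex. Here I expect to need negative curvature in a stronger form than Hadamard: although $u_n+w\notin V_{\alpha_{n-1}}$ forces $\alpha(u_n+w)\ge\alpha_n$ for every $w$, turning this pointwise statement into a bound uniform enough in $w$ to survive the infimum is the main obstacle of the whole proof.
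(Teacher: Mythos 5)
Your two easy links are fine and essentially what the paper does ((b)$\Rightarrow$(a) via the duality inequality $|v(u)|^2\le\|v\|^2_{-t}\|u\|^2_t$ and $\|u\|^2_t\le\|u\|^2_0e^{t\alpha_j}$ for $u\in V_{\alpha_j}$; (c)$\Rightarrow$(b) is immediate). But the substantive implication (a)$\Rightarrow$(c) is not proved: the lower bound in your Key Lemma, which you yourself defer, is not a ``remaining step'' but is essentially an instance of the statement you are trying to prove. Concretely, for $v$ annihilating $V_{\alpha_{n-1}}$ one has $\|v\|^2_{-t}=|v(u_n)|^2/\inf_{w\in V_{\alpha_{n-1}}}\|u_n+w\|^2_t$, so the claim that the quotient metric on $V/V_{\alpha_{n-1}}$ has exponential rate at least $\alpha_n$ is literally assertion (c) for $j=n-1$; the Schur-complement induction therefore runs in a circle. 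Nor can the gap be closed softly: as you observe, the infimum over $w$ destroys log-convexity in $t$ (the quotient of a negatively curved metric by a subbundle need not be negatively curved in this singular sense), so no convexity or Hadamard-type argument will give the lower bound, and the ``uniformity in $w$'' you flag is exactly where a new idea is required.

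The paper supplies that idea by comparison with flat metrics rather than by quotients or determinants. At each time $t$ one diagonalizes $\|\cdot\|_t$ against $\|\cdot\|_0$, getting eigenvalues $e^{t\lambda_j(t)}$, and forms the flat metric $\|u\|^2_{s,t}=\sum|c_j|^2e^{s\lambda_j(t)}$ on $0\le s\le t$; the maximum principle for a positively versus a negatively curved metric agreeing at the boundary (Lemma 2.1) gives $\|u\|^2_s\le\|u\|^2_{s,t}$, hence by min--max the $\lambda_j(t)$ are increasing in $t$, and the limit is a flat metric $h_\infty\ge h$ with the same jumping numbers (Proposition 2.2), so that $\lambda_j(t)\to\alpha_j$. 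This monotone spectral convergence is precisely the quantitative input your Key Lemma needs: in a fixed basis $\log\det B_t=\log\det B_0+t\sum_j\lambda_j(t)$, so $(1/t)\log\det B_t\to\sum_j\alpha_j$ follows at once, and your Cramer's-rule reduction would then go through (your identification of the restricted spectrum of $W_k$ as $(\alpha_l)_{l\neq k}$ is correct). The paper itself proceeds slightly differently from there, converting $\lambda_j(t)\to\alpha_j$ into decay of $\|v\|^2_{-t}$ by a triangular change of basis adapted to the filtration and a downward induction on the index (Lemma 2.3), but the missing ingredient in your proposal is in either case the flat-metric comparison via the maximum principle, not anything about quotient norms.
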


The equivalence between conditions (a) and (b)  here says that the spaces $F_\alpha$ can be defined in terms of the integrability properties of the dual norms.  
In the next section we will give a proof of this somewhat technical looking result, the only non trivial part being the implication from a. to c. After that we will give two examples of how metrics satisfying our very particular assumptions actually arise in 'practice'.Notice however already now that it follows from Theorem 1.1 that for any $v$ in $V^*$, the set of $\alpha$ such that 
$$
\int_0^\infty \|v\|^2_{-t} e^{t\alpha} dt <\infty,
$$
is always an open   interval.

\section{ The proof of theorem 1.1}

We let $E=V\times \H$ be a trivial vector bundle over the right half plane. We suppose given a metric $\|u\|_t$ defined for $u$ in $V$ and $t>0$, such that the bundle metric $\|u\|_{\Re \zeta}$ has negative curvature. We also assume that the metric has moderate growth at infinity, so that $\|u\|^2_t\leq C(u) e^{at}$ for some constants $C$ and $a$.  It will be convenient to have the metric defined also for $t=0$. By the curvature assumption, $\log\|u\|_t$ is convex, hence also continuous and we assume it extends continuously to $t=0$. Then it follows from the convexity that 
$$
(1/t)\log(\|u\|_t/\|u\|_0)
$$
is an increasing function.  Hence $u$ lies in $V_\alpha=\{u; \alpha(u)\leq \alpha\}$ if and only if $\|u\|_t^2\leq\|u\|^2_0e^{t\alpha}$ for all $t\geq 0$.

At several occasions we will have use for the following lemma (see \cite{Coifman-Semmes}, \cite{Berman-Keller}, \cite{Lempert})
\begin{lma}
  Let $E$ be a vector bundle over a bounded domain $D$ in $\C$ (or $\C^n$). Let $h_1$ and $h_2$ be two metrics on $E$ that extend continuously to the closure of $D$. Assume that $h_1$ has semipositive curvature and $h_2$ has seminegative curvature and that $h_1\geq h_2$ over $\partial D$. Then $h_1\geq h_2$ in $D$.
\end{lma}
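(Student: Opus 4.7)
The inequality $h_1(z)\geq h_2(z)$ is equivalent to saying that every generalized eigenvalue of the pencil --- every root $\lambda$ of $\det(h_2(z) - \lambda h_1(z)) = 0$ --- satisfies $\lambda\leq 1$. Call these eigenvalues $\lambda_1(z),\ldots,\lambda_r(z)$ where $r = \mathrm{rank}(E)$; they are positive, continuous on $\bar D$ by continuity of $h_1,h_2$, and satisfy $\lambda_i\leq 1$ on $\partial D$. In the rank--one case the proof is immediate: writing $h_i = e^{-\phi_i}$, the curvature hypotheses say $\phi_1$ and $-\phi_2$ are plurisubharmonic, so their sum $\phi_1-\phi_2$ is psh and $\leq 0$ on $\partial D$, and the ordinary maximum principle finishes. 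For $r\geq 2$ this simple subtraction fails, because $\log\|u\|_{h_1}^2$ is not plurisuperharmonic for a holomorphic section $u$, even when $h_1$ is smooth and strictly Griffiths--positive.

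My plan is to pass to symmetric powers. For each integer $k\geq 1$, the bundle $S^kE$ carries the induced metrics $S^kh_1$ (semipositively curved) and $S^kh_2$ (seminegatively curved), and these signs are preserved by the standard tensor--algebra operations of dualization, tensor product, and symmetric power, also in the singular setting of this paper (see \cite{Raufi}). Consequently the bundle $\mathrm{End}(S^kE) = (S^kE)^*\otimes S^kE$, equipped with the metric $(S^kh_1)^* \otimes S^kh_2$, is seminegatively curved as a tensor product of two seminegatively curved metrics. Applied to the tautological holomorphic section $\mathrm{id}_{S^kE}$, this gives that $z \mapsto \log\|\mathrm{id}_{S^kE}\|^2$ is plurisubharmonic on $D$ and continuous up to $\bar D$. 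In a basis that simultaneously diagonalizes $h_1(z)$ and $h_2(z)$, a short computation identifies
$$\|\mathrm{id}_{S^kE}\|^2 \;=\; \mathrm{tr}\bigl(S^k(h_1^{-1}h_2)\bigr) \;=\; h_k\bigl(\lambda_1(z),\ldots,\lambda_r(z)\bigr),$$
the $k$--th complete homogeneous symmetric polynomial in the generalized eigenvalues.

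The ordinary maximum principle for psh functions then yields, for every $z\in D$,
$$h_k(\lambda_1(z),\ldots,\lambda_r(z)) \;\leq\; \sup_{\partial D} h_k(\lambda_1,\ldots,\lambda_r) \;\leq\; h_k(1,\ldots,1) \;=\; \binom{r+k-1}{k},$$
the last step using $\lambda_i\leq 1$ on $\partial D$. Bounding the left--hand side below by the single monomial $\lambda_{\max}^k$ gives $\lambda_{\max}(z)\leq \binom{r+k-1}{k}^{1/k}$; as $k\to\infty$, the binomial grows only polynomially in $k$ and the $k$--th root forces the bound down to $1$, so $\lambda_{\max}(z)\leq 1$ throughout $D$, which is the desired conclusion $h_1\geq h_2$ in $D$. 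The main obstacle I anticipate is the verification that symmetric powers, tensor products, and duals preserve Griffiths curvature signs in the singular framework used here; Raufi's examples show that naive curvature computations can break down for singular positively curved metrics, so the relevant preservation results must be invoked carefully from the literature rather than re--derived by a direct pointwise calculation.
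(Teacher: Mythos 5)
Your proof is correct in outline, and it takes a genuinely different route from the paper: the paper does not prove Lemma 2.1 at all, but simply quotes it from the literature (\cite{Coifman-Semmes}, \cite{Berman-Keller}, \cite{Lempert}), where the comparison/maximum principle is established by other means. Your argument --- passing to $S^kE$, noting that the squared norm of the identity section of $\mathrm{Hom}(S^kE,S^kE)$, measured with $(S^kh_1)^*\otimes S^kh_2$, equals $\mathrm{tr}\,\bigl((S^kh_1)^{-1}S^kh_2\bigr)=h_k(\lambda_1,\dots,\lambda_r)$, applying the scalar maximum principle to this subharmonic function, and then letting $k\to\infty$ so that the boundary bound $\binom{r+k-1}{k}$, being polynomial in $k$, disappears after taking $k$-th roots --- is a clean, essentially self-contained reduction of the matrix-valued maximum principle to the scalar one, and for each finite $k$ it even gives a quantitative statement. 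The eigenvalue identification, the monotonicity of $h_k$ on the positive orthant, the lower bound by $\lambda_{\max}^k$, and the limit are all sound.

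The one step you flag --- preservation of curvature signs under duals, tensor products and symmetric powers in the singular setting --- is indeed the only real issue, and you should not expect to quote it wholesale from \cite{Raufi}; the statement that a tensor product of two seminegatively curved singular metrics is seminegatively curved is not readily available there. But under the hypotheses of the lemma it can be closed elementarily. First, everything reduces to the seminegative side: by the paper's definition, $h_1$ semipositive means $h_1^*$ seminegative; $(S^kh_1)^*$ is identified with $S^k(h_1^*)$ (up to harmless constants, since the symmetrizer is an orthogonal projection for any induced tensor metric); and restricting a seminegative metric to a holomorphic subbundle such as $S^kE\subset E^{\otimes k}$ trivially preserves seminegativity, because holomorphic sections of the subbundle are holomorphic sections of the ambient bundle. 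So only the tensor product of two seminegative metrics is at stake. Since subharmonicity of $\|\mathrm{id}\|^2$ is a local statement and the metrics in the lemma are continuous, convolve each factor with an approximate identity on a slightly smaller ball: the convolved metric is smooth, still seminegative (for fixed translation $w$, the function $z\mapsto\langle u(z),u(z)\rangle_{h(z-w)}$ is the norm of the holomorphic section $\zeta\mapsto u(\zeta+w)$, hence plurisubharmonic), and for smooth Griffiths-seminegative metrics the tensor product is seminegative because $\Theta_1(\xi,\bar\xi)\otimes I+I\otimes\Theta_2(\xi,\bar\xi)\leq 0$ for each tangent vector $\xi$. Continuity gives locally uniform convergence of the norms, so subharmonicity passes to the limit, and your maximum-principle argument then goes through verbatim.
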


Take $t>0$. By the spectral theorem we can find a basis $e_j=e_j(t)$ which is orthonormal for $t=0$ and diagonalizes $\|\cdot\|_t$. Hence there are numbers $\lambda_j=\lambda_j(t)$, such that if $u=\sum c_j e_j$ then 
$$
\|u\|^2_0=\sum |c_j|^2,
$$
and
$$
\|u\|^2_t=\sum |c_j|^2 e^{t\lambda_j(t)}.
$$
Reordering, we assume that $\lambda_1\leq\lambda_2\leq ...\lambda_n$.

Given $t$ we now define a new metric
\be
\|u\|^2_{s,t}=\sum |c_j|^2 e^{s\lambda_j(t)}
\ee
for $0\leq s\leq t$. This metric is clearly flat and coincides with our original metric for $s=0$ and $s=t$. By the lemma it follows that $\|u\|^2_{s,t}\geq \|u\|^2_s$ for $0\leq s\leq t$. The min-max formula for eigenvalues then gives that $\lambda_j(s)\leq\lambda_j(t)$, so the $\lambda_j(t)$:s are increasing functions of $t$. Since our metric has moderate growth they are also bounded from above and therefore have limits $\lambda_j(\infty)$. We shall prove that $\lambda_j(\infty)=\alpha_j$.

If $\phi$ is a  subharmonic function in the disk, bounded from above, we can write it as $\phi=v+g$, where $v$ is harmonic and $g=G[\Delta\phi]$ is the Green potential of $\Delta\phi$. Replacing $\mu=\Delta\phi$ by a Dirac mass at 0, $\mu(\{0\})\delta_0$, we get a subharmonic function $\phi_0$ which is larger that $\phi$, harmonic in the punctured disk, with the same Lelong number at 0 as $\phi$. The next proposition generalizes this construction to the vector valued case. 
\begin{prop} Let $E=V\times \H$ have  a negatively curved  metric $h$ of moderate growth, that depends only on $t=\Re \zeta$. Then there is a  unique     negatively curved metric $h_\infty\geq h$, such that $h_\infty$ depends only on $t$, $h_\infty=h$ for $t=0$,  which is flat  and
  satisfies
  \be
  \lim_{t\to\infty}(1/t)\log\|u\|^2_{t, \infty}=\lim_{t\to\infty}(1/t)\log\|u\|^2_t
  \ee
  for any $u$ in $V$. 
Hence $h_\infty$   defines the same jumping numbers $\alpha_j$ and filtration $V_\alpha$ as $h$. 
\end{prop}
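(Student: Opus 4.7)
The plan is to construct $h_\infty$ as the pointwise monotone limit, as $t\to\infty$, of the flat interpolating metrics $\|\cdot\|_{s,t}^2$ from (2.1). The first step is monotonicity in $t$ for fixed $s$: for $s\leq t_1\leq t_2$, both $\|\cdot\|_{s,t_1}^2$ and $\|\cdot\|_{s,t_2}^2$ are flat on $[0,t_1]$ and agree at $s=0$, while at $s=t_1$ the inequality $\|\cdot\|_{t_1,t_2}^2\geq\|\cdot\|_{t_1}^2=\|\cdot\|_{t_1,t_1}^2$ is already established by the paper's use of Lemma 2.1. A second application of Lemma 2.1 to this flat pair then gives $\|\cdot\|_{s,t_2}^2\geq\|\cdot\|_{s,t_1}^2$ throughout $[0,t_1]$. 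Boundedness from above of the eigenvalues $\lambda_j(t)$ follows from moderate growth together with compactness of the $\|\cdot\|_0$-unit sphere in $V$: convexity of $t\mapsto\log\|u\|_t^2$ combined with $\alpha(u)\leq a$ yields, for any fixed $t_0>0$, a uniform bound $\log\|u\|_t^2\leq M+(t-t_0)a$ on the unit sphere. Thus $\lambda_j(\infty):=\lim_t\lambda_j(t)$ exists. Passing to a subsequence along which the $\|\cdot\|_0$-orthonormal diagonalizing bases $e_j(t)$ converge to a basis $e_j(\infty)$, I set $\|u\|_{s,\infty}^2:=\sum_j|c_j|^2 e^{s\lambda_j(\infty)}$ for $u=\sum c_j e_j(\infty)$. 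This is manifestly flat, agrees with the monotone (hence subsequence-independent) limit $\lim_t\|u\|_{s,t}^2$, and satisfies $h_\infty\geq h$ and $h_\infty|_{s=0}=h|_{s=0}$.

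The crux of the proposition is to show $\lambda_j(\infty)=\alpha_j$, which at once yields (2.2) and forces the filtration $W_\mu:=\{u:\alpha_\infty(u)\leq\mu\}$ of $h_\infty$ to coincide with $V_\mu$. For the upper bound, I test with $W=V_{\alpha_j}$ in the Courant--Fischer identity $\lambda_j(t)=\min_{W:\dim W=j}\max_{u\in W\setminus 0}(1/t)\log(\|u\|_t^2/\|u\|_0^2)$: for $u\in V_{\alpha_j}$, convexity of $\log\|u\|_t^2$ renders $(1/t)\log(\|u\|_t^2/\|u\|_0^2)\leq\alpha(u)\leq\alpha_j$, so $\lambda_j(t)\leq\alpha_j$. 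For the matching lower bound, observe that $h_\infty\geq h$ implies $\alpha_\infty(u)\geq\alpha(u)$, hence $W_\mu\subseteq V_\mu$ for every $\mu$. Since $\alpha_\infty(\sum c_j e_j(\infty))=\max\{\lambda_j(\infty):c_j\neq 0\}$, the dimension $\dim W_\mu$ equals the number of $i$ with $\lambda_i(\infty)\leq\mu$. Choosing $\mu=\alpha_j-\epsilon$ gives $\dim W_{\alpha_j-\epsilon}\leq\dim V_{\alpha_j-\epsilon}\leq j-1$, so fewer than $j$ of the $\lambda_i(\infty)$'s lie below $\alpha_j-\epsilon$, forcing $\lambda_j(\infty)\geq\alpha_j-\epsilon$ for every $\epsilon>0$, and hence $\lambda_j(\infty)=\alpha_j$.

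For uniqueness, any flat negatively curved $h'$ with $h'\geq h$ and $h'|_{s=0}=h|_{s=0}$ dominates $\|\cdot\|_{s,t}^2$ on $[0,t]$, by Lemma 2.1 applied to the flat pair (equal at $s=0$; $h'\geq h=\|\cdot\|_{t,t}^2$ at $s=t$), so $h'\geq h_\infty$ after letting $t\to\infty$. If $h'$ also satisfies (2.2), then both $h'$ and $h_\infty$ are flat metrics starting at $\|\cdot\|_0$ with identical eigenvalues $\alpha_j$ and identical filtration $V_\alpha$; this pins them down uniquely, since on each graded piece $V_{\alpha_j}\ominus V_{\alpha_{j-1}}$ (orthogonal in $\|\cdot\|_0$) the metric is simply $e^{s\alpha_j}\|\cdot\|_0^2$. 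The principal obstacle is the reverse inequality $\lambda_j(\infty)\geq\alpha_j$: the analytic route via min-max gives only the upper bound, and the trick is to pivot to a clean dimension-counting statement about the nested filtrations $W_\mu\subseteq V_\mu$.
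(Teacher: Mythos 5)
Your existence argument is sound and in part takes a genuinely different route from the paper: the paper obtains (2.2) by comparing the convex function $s\mapsto\log\|u\|^2_{s,t}$ with the linear function $\log\|u\|^2_0+s\alpha$ for $u\in V_\alpha$, whereas you identify the exponents by Courant--Fischer (testing with a $j$-dimensional subspace of $V_{\alpha_j}$ to get $\lambda_j(t)\leq\alpha_j$) combined with the dimension count $\dim W_\mu\leq\dim V_\mu$; this also delivers $\lambda_j(\infty)=\alpha_j$ en route, a fact the paper only records after the proposition. Your monotonicity of $\|\cdot\|^2_{s,t}$ in $t$, the boundedness of the $\lambda_j(t)$ (whose monotonicity, needed for $\lim_t\lambda_j(t)$ to exist, follows from your first step via min--max, as in the paper), and the subsequential diagonalization producing an explicitly flat limit are all fine.

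The gap is in the uniqueness half. You conclude from ``$h'$ flat, $h'_0=\|\cdot\|_0$, same filtration and same asymptotic exponents'' that $h'$ is determined, ``since on each graded piece $V_{\alpha_j}\ominus V_{\alpha_{j-1}}$ the metric is simply $e^{s\alpha_j}\|\cdot\|^2_0$.'' This presupposes a structure theorem for flat metrics depending only on $t$: that $h'_t$ is diagonalized in one fixed $\|\cdot\|_0$-orthonormal basis with purely exponential eigenvalues $e^{t\tilde\lambda_j}$, and in particular that the graded pieces remain $h'_t$-orthogonal for all $t$. Flatness does not give this for free: it is not a condition on individual norm functions ($\log\|u\|'^2_t$ need not be affine for a fixed $u$), so knowing the filtration and the slopes $\lim_t(1/t)\log\|u\|'^2_t$ does not obviously pin down $h'_t$ at finite $t$; nor does your (correct) inequality $h'\geq h_\infty$ help, since a one-sided comparison of convex functions with equal value at $0$ and equal asymptotic slope does not force equality. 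Establishing this normal form is exactly the content of the paper's uniqueness paragraph: applying Lemma 2.1 in both directions shows $\|\cdot\|_{s,t}=\|\cdot\|_s$ when the metric is already flat, whence the eigenvalues are independent of $t$ and the unitary base changes between eigenbases at different times are triangular, hence diagonal, giving $h'_t=\sum|b_j|^2e^{t\tilde\lambda_j}$ in a fixed basis; only then does the ``same filtration, same exponents'' step (your graded-piece argument, or the paper's triangular-unitary argument) close the proof. Supplying that lemma would make your proof complete.
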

\begin{proof} Fix first $t>0$ and define a metric $h_t$ by
$$
\|u\|^2_{s,t}
$$
which is equal to $\|u\|^2_s$ if $s\geq t$ and flat for $0<s<t$. For $s<t$ it is given by (2.1) and we have seen that $\|u\|^2_s\leq \|u\|^2_{s,t}$ for all $s>0$. 

Since  $h_t$
is given by a max construction, $\log\|u(z)\|_{\Re z,t}$ is subharmonic for any holomorphic section $u(z)$ of $E$, so $h_t$ is negatively curved. 
It therefore follows from the lemma again that $h_t$ increases with $t$.  
Hence $h_t$  has a limit, $h_\infty$.  By the subharmonicity criterion, this limit has seminegative curvature and the
dual metric on $E^*$ also has seminegative curvature. Hence the metric is in fact flat.

If $u\in V_\alpha$ then $\|u\|^2_t\leq \|u\|^2_0 e^{t\alpha}$. Since $\log\|u\|^2_{s,t}$ is convex and $\log(\|u\|^2_0 e^{s\alpha})$ is linear it follows  that
$$
\|u\|^2_{s,t}\leq \|u\|^2_0e^{s\alpha}
$$
for $s<t$. Hence
$$
\|u\|^2_{s,\infty}\leq \|u\|^2_0 e^{s\alpha}
$$
so
$$
\lim_{s\to \infty} (1/s)\log\|u\|^2_{s,\infty}\leq \alpha.
$$
Hence the 'negative   Lelong numbers'  of $\|\cdot\|_{s,\infty}$ are not larger than the negative Lelong numbers of $\|\cdot\|_s$, which proves the existence part of the theorem since the opposite inequality is evident.

To prove the uniqueness part we first note that if we apply the first part of the argument to a metric that is already flat, then $\|\cdot\|_{s,t}=\|\cdot\|_s$ for $s\leq t$, which means that $\lambda_j(s)=\lambda_j(t)$ is independent of $s$. Moreover, since the span of $\{e_j(s)\}_{j\leq k}$ is equal to the span of $\{e_j(t)\}_{j\leq k}$ (or can be chosen equal in case some of the eigenvalues are multiple), it follows that the unitary change of basis from $e_j(s)$ to $e_j(t)$ is diagonal. This is because the corresponding unitary matrix is lower triangular, which implies that it is diagonal if it is unitary. It follows that we can choose the basis $e_j$ independent of $s$, so the norms can be written
$$
\sum |c_j|^2 e^{t\lambda_j}
$$
for fixed $\lambda_j$ and a fixed basis if $\|\cdot\|^2_t$ is  flat.

Now let $|u|_t$ be a flat metric satisfying the assumptions of the theorem. Then $ \|u\|_{t,\infty}\leq |u|_t$. By the discussion above, there are two fixed bases, orthnormal for $\|\cdot\|_0$,  $f_j$ and $\tilde f_j$ such that if $u=\sum a_j f_j=\sum b_j \tilde f_j$, then
$$
\|u\|^2_{t,\infty}=\sum |a_j|^2 e^{t\lambda_j(\infty)}, \quad |u|_t^2=\sum|b_j|^2 e^{t\tilde \lambda_j}.
$$
The jumping numbers for both these norms are  the same since both coincide with the jumping numbers for $\|u\|_t$. Hence $\lambda_j(\infty)=\tilde\lambda_j$. It also follows that the spans of $\{f_j\}_{j\leq k}$ and  $\{\tilde f_j\}_{j\leq k}$ are the same, so by the same argument as above the unitary base change matrix is diagonal. Hence the norms are identical, which is what we wanted to prove. 
\end{proof}

From the proof above we have that there is one fixed basis such that if  $u=\sum a_j f_j$, 
$$
\|u\|^2_{s,\infty}=\sum |a_j|^2 e^{s\lambda_j(\infty)}.
$$
Since for fixed $s$ the  norms $\|\cdot\|_{s,t}$ increase to $\|\cdot\|_{s,\infty}$, the corresponding  eigenvalues (with respect to $\|\cdot\|_0$) converge. Hence  $\lambda_j(\infty)=\lim_{t\to\infty} \lambda_j(t)$. On    the other hand, $\lambda_j(\infty)$ are the jumping numbers for $h_\infty$, which are the same as the jumping numpers of $h$, so $\lim_{t\to\infty}\lambda_j(t)=\alpha_j$. 

The next lemma is the crucial step in the proof of Theorem 1.1.
\begin{lma} Assume that $v\in V^*$ and that $v\in F_{\alpha_m}$, where $F_{\alpha_m}=V_{\alpha_m}^\perp$ and $\alpha_m<\alpha_{m+1}$. Then
  $$
  \limsup_{t\to \infty}(1/t)\log\|v\|^2_{-t}\leq-\alpha_{m+1}.
  $$
  \end{lma}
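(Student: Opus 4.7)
The plan is to reduce the claim to an eigenvalue estimate for the quotient metric on $(V/V_{\alpha_m})\times\H$, then apply Proposition~2.2 twice---once to $h$ and once to its restriction to $V_{\alpha_m}\times\H$---and control the relevant eigenvalues via a Schur complement / Cauchy interlacing computation.

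First I would observe that for $v\in F_{\alpha_m}=V_{\alpha_m}^\perp$ the linear form $v$ factors through $V/V_{\alpha_m}$, so a direct sup/inf manipulation gives
$$\|v\|^2_{-t}=\sup_{[u]\ne 0}\frac{|v([u])|^2}{\|[u]\|^2_{q,t}},\qquad \|[u]\|^2_{q,t}:=\inf_{w\in V_{\alpha_m}}\|u+w\|^2_t,$$
where $q$ is the quotient metric on $(V/V_{\alpha_m})\times\H$. Since restricting $h^*$ to the subbundle $F_{\alpha_m}\times\H\subset E^*$ gives a positively curved metric, and $q$ is its dual via the natural identification $F_{\alpha_m}^*\simeq V/V_{\alpha_m}$, the metric $q$ is negatively curved and of moderate growth. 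Applying Proposition~2.2 to $q$ therefore yields jumping numbers $\beta_1\le\cdots\le\beta_{n-m}$ and eigenvalues $e^{t\lambda^q_j(t)}$ of $q_t$ relative to $q_0$ with $\lambda^q_j(t)\uparrow\beta_j$, and the base-case bound $\|u\|^2_{q,t}\ge\|u\|^2_{q,0}\,e^{t\lambda^q_1(t)}$ gives $\|v\|^2_{-t}\le\|v\|^2_{q^*,0}\,e^{-t\lambda^q_1(t)}$. So the lemma comes down to $\beta_1\ge\alpha_{m+1}$.

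For that step I would fix the $\|\cdot\|_0$-orthonormal basis $f_1,\ldots,f_n$ produced by Proposition~2.2, chosen so that $V_{\alpha_k}=\operatorname{span}(f_1,\ldots,f_k)$ for each $k$, and block-decompose the Gram matrix $H(t)$ of $\|\cdot\|_t$ in this basis along $V_{\alpha_m}$ and its $\|\cdot\|_0$-orthogonal complement. Carrying out the Gaussian minimization in $w$ then shows that the Gram matrix of $q_t$ in the $q_0$-orthonormal basis $[f_{m+1}],\ldots,[f_n]$ is exactly the Schur complement $S(t)=H^{(22)}-H^{(21)}(H^{(11)})^{-1}H^{(12)}$, so $\rho_1(t):=e^{t\lambda^q_1(t)}$ is its smallest eigenvalue. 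Proposition~2.2 applied to $h$ and to $h|_{V_{\alpha_m}}$ (whose jumping numbers are $\alpha_1,\ldots,\alpha_m$) gives $(1/t)\log\det H(t)\to\sum_j\alpha_j$ and $(1/t)\log\det H^{(11)}(t)\to\sum_{j\le m}\alpha_j$, so from $\det H=\det H^{(11)}\det S$ one reads off $(1/t)\log\det S(t)\to\sum_{j>m}\alpha_j$. Independently, $S\le H^{(22)}$ together with Cauchy interlacing for the principal submatrix $H^{(22)}$ of $H$ yields $\rho_j(t)\le e^{t\lambda_{j+m}(t)}$ for every $j$; dividing $\det S(t)$ by the product of these upper bounds for $j\ge 2$ produces $\rho_1(t)\ge e^{t\lambda_{m+1}(t)+o(t)}$, and taking $(1/t)\log$ as $t\to\infty$ delivers $\beta_1\ge\alpha_{m+1}$.

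The reason the lemma is nontrivial---and the main obstacle to a more direct proof---is that Proposition~2.2 applied to $h$ only supplies the flat \emph{upper} bound $h_\infty\ge h$, which dualizes to a \emph{lower} bound on $\|v\|^2_{-t}$, the wrong direction for the lemma. The route above circumvents this by exploiting the two pieces of information Proposition~2.2 does pin down (the asymptotics of $\det H$ on $V$ and on $V_{\alpha_m}$) and by letting Cauchy interlacing absorb the larger eigenvalues of $S(t)$, so that the needed lower bound on the smallest eigenvalue $\rho_1(t)$ pops out of the identity $\det H=\det H^{(11)}\det S$.
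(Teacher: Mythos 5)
Your argument is essentially correct and genuinely different from the paper's, but one justification in it is wrong and should be excised. The claim that restricting $h^*$ to the subbundle $F_{\alpha_m}\times\H\subset E^*$ yields a positively curved metric (equivalently, that the quotient metric $q$ on $(V/V_{\alpha_m})\times\H$ is negatively curved) does not follow: curvature inheritance goes the other way around --- subbundles inherit \emph{negativity}, quotients inherit \emph{positivity} (in the smooth picture the second fundamental form makes subbundles more negative and quotients more positive), so a quotient of a negatively curved metric need not be negatively curved, and you give no special argument for this particular subbundle. Consequently you are not entitled to apply Proposition 2.2 to $q$, and the numbers $\beta_j$ and the monotone convergence $\lambda^q_j(t)\uparrow\beta_j$ are not available. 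Fortunately, your proof never really uses them: the inequality $\|v\|^2_{-t}\leq\|v\|^2_{q^*,0}\,\rho_1(t)^{-1}$, with $\rho_1(t)$ the smallest eigenvalue of $q_t$ relative to $q_0$ (i.e.\ of the Schur complement $S(t)$), is pure linear algebra, and your determinant/interlacing computation gives directly $\liminf_{t\to\infty}(1/t)\log\rho_1(t)\geq\alpha_{m+1}$, which is all the lemma needs. The two applications of Proposition 2.2 that the computation actually requires --- to $h$ itself and to its restriction to $V_{\alpha_m}\times\H$, whose jumping numbers are indeed $\alpha_1,\dots,\alpha_m$ --- are legitimate, since restriction to a constant subbundle does preserve negative curvature and moderate growth. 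So simply delete the curvature assertion about $q$ and state the reduction with a $\liminf$ instead of with $\beta_1$; the rest stands.

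As for comparison: the paper proves the lemma quite differently, by passing to the $\|\cdot\|_0$-orthonormal eigenbasis $e_j(t)$, building from the flat limit metric a triangular unit basis $g_j$ adapted to the flag $V_{\alpha_1}\subset\dots\subset V_{\alpha_n}$, and running a downward induction on the coordinates $v_k$ of $v$, using the orthogonality $v\perp g_k$ and the exponential smallness of the off-diagonal coefficients $g^k_j$ to bound each term $|v_k|^2e^{-t\lambda_k(t)}$ by $Ce^{-t\lambda_{m+1}}e^{\epsilon t}$. Your route replaces this coordinate induction by the identity $\det H=\det H^{(11)}\det S$ together with Cauchy interlacing, extracting the lower bound on the smallest eigenvalue of the quotient form from determinant asymptotics. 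This is arguably cleaner in that it avoids the $\epsilon$-bookkeeping and the triangularization, and it handles coinciding jumping numbers with no extra notation, whereas the paper only writes out the case of distinct $\alpha_j$; the paper's proof, on the other hand, stays entirely inside the dual-norm expansion and needs no quotient construction.
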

\begin{proof} We will carry out the proof assuming that all jumping numbers $\alpha_j$ are different, leaving the mostly notational changes for the general case to the reader.

  Take $t>>0$ and write for $u\in V$, $u=\sum c_j e_j$ with respect to an orthonormal basis for $\|\cdot\|_0$ as above, so that
  $$
  \|u\|^2_t=\sum |c_j|^2 e^{t\lambda_j(t)}.
    $$

    We identify $V$ with $F=V^*$, by the conjugate linear isomorphism defined by $\|\cdot\|_0$, so that if $v\in F$, $v=\sum v_je_j$ and the dual norm of $v$ is
    $$
    \|v\|^2_{-t}=\sum |v_j|^2 e^{-t\lambda_j(t)}.
    $$
    We can assume $\|v\|_0=1$. 
    We also use the orthonormal basis $f_j$, with respect to which $u=\sum b_jf_j$ and 
    $$
    \|u\|^2_{t,\infty}=\sum |b_j|^2e^{t\alpha_j}.
    $$
    Write $f_j=\sum d^j_k e_k$. Then
    $$
    e^{t\alpha_j}=\|f_j\|^2=\sum |d^j_k|^2 e^{t\lambda_k(t)}.
    $$
    If $t$ is large enough, $\lambda_k(t)$ is close to $\alpha_k$ . Taking $j=1$, we see that $d^1_k$ is then (exponentially) small if $k>1$, so $|d^1_1|$ must be close to 1 (since the change of basis is unitary). We can therefore subtract suitable multiples of $f_1$ from all the $f_j$ with $j>1$, to achieve $d^j_1=0$. Carrying on in this way and renormalizing we get a new basis $g_j$ of unit  vectors, such that $g_1=f_1$ and the span $[g_1, ...g_j]=V_{\alpha_j}$ and
    $$
    g_j=\sum g^j_ke_k,
    $$
    where the matrix $(g^j_k)$ is upper triangular so that $g^j_k=0$ if $k<j$. 
    We then have that $v\perp g_j$ for $j\leq m$ by hypothesis. We claim that for any $\epsilon>0$, and any $k$, 
    \be
    |v_k|^2e^{-t\lambda_k(t)}\leq Ce^{-t\lambda_{m+1}} e^{\epsilon t},
    \ee
    if $t$ is large enough. This is clearly true, with $C=1$ and $\epsilon=0$, if $k>m$, since $\lambda_k\geq\lambda_{m+1}$ then, and $|v_k|\leq 1$. We next argue by induction, assuming the claim holds for all larger indices than $k$. 
    Since $v\perp g_k$
    $$
    \bar v_k g^k_k=-\sum_{j>k} \bar v_jg^k_j.
    $$
    Since $g_k\in V_{\alpha_k}$ and $\|g_k\|_0=1$ we have $\|g_k\|^2_t\leq e^{t\alpha_k}$, so
    $$
    \sum |g^k_j|^2 e^{t\lambda_j(t)}\leq e^{t\alpha_k}.
    $$
    If $j>k$, $\lambda_j$ is close to $\alpha_j>\alpha_k$ so $g^k_j$ must be (exponentially) small, and then $|g^k_k|$ must be close to 1, since $g^k_j=0$ if $j<k$. Hence
    $$
    |v_k|^2\leq C\sum_{j>k}|v_j|^2 e^{-t\lambda_j(t)}\sum_{j>k}|g^k_j|^2e^{t\lambda_j(t)}\leq
    C\sum_{j>k}|v_j|^2e^{-t\lambda_j(t)} e^{t\alpha_k}.
    $$
    By the induction hypothesis this is smaller than
    $$
    C e^{-t\lambda_{m+1}} e^{\epsilon t} e^{t\alpha_k}\leq C e^{-t\lambda_{m+1}} e^{\epsilon t} e^{t\lambda_k(t)}e^{\epsilon t},
    $$
    is $t$ is large enough, which  proves the claim. 
 
    From the claim it follows immediately that
    $$
    \|v\|^2_{-t}\leq Ce^{-t\alpha_{m+1}} e^{\epsilon t},
    $$
    so the proposition is proved.

    \end{proof}

We are now ready to prove Theorem 1.1. It is immediately clear that (b) implies that 
$$
\liminf \|v\|^2_{-t} e^{t\alpha}=0.
$$
Take $u$ in $V_{\alpha_j}$ such that $\|u\|^2_0=1$ . Then 
$$
|\langle v,u\rangle|^2\leq \|v\|^2_{-t}\|u\|^2_t\leq\|v\|^2_{-t}e^{t\alpha_j},
$$
since $u\in V_{\alpha_j}$ and $\|u\|^2_0=1$ implies $\|u\|^2_t\leq e^{t\alpha_j}$. 
The liminf of the right hand side is zero. Hence $v$ lies in $F_{\alpha_j}$ so we have proved (a). This implies (c) by the lemma, which in turn gives (b) again. Hence all the conditions are equivalent and we have proved Theorem 1.1.

\section{ Example 1: The global case of the 'strong openness problem'.}
The original 'openness conjecture' concerns the local integrability of $e^{-\psi}$ where $\psi$ is plurisubharmonic and states that the interval of all positive numbers $p$ such that $e^{-p\psi}$ is integrable in some neighbourhood of the origin  is open. This was proved in \cite{Berndtsson-open}. The strong openness conjecture is the same statement for functions $|f|^2 e^{-p\psi}$, where $f$ is holomorphic, and was first proved by Guan-Zhou   in \cite{Guan-Zhou}. In this section we shall show how a (simpler) global version
of  strong openness  follows from  Theorem 1.1. The full strong openness would follow from an extension of Theorem 1.1 to bundles of infinite rank -- this is one reason why we think that such an extension would be interesting. (See the remark below.) 

We consider the following setting. $X$ is a compact projective (or only K\"ahler) manifold and $L$ is a semipositive holomorphic line bundle over $X$. Let $ e^{-\phi}$ be a smooth metric of semipositive curvature on $L$. Let $\omega=i\ddbar\phi$ and let $\psi$ be a function such that $2\psi$ is  $\omega$-plurisubharmonic function on $X$. This means that $\psi$ is integrable and $2i\ddbar\psi+\omega\geq 0$. (The constant 2 here will be clear later: it could be replaced by any number greater than 1.)

We consider the vector space $H^0(X, K_X+L)$ which we think of as the space of holomorphic $(n,0)$-forms on $X$ with values in $L$. It can be equipped with the $L^2$-norms
$$
c_n\int_X v\wedge\bar v e^{-\phi}
$$
and
$$
c_n\int_X v\wedge\bar v e^{-\phi-\psi}.
$$
( Here $c_n=i^{n^2}$ is the standard unimodular constant that makes the norms nonnegative.)
\begin{prop} Let $v$ be  an element of $H^0(X, K_X+L)$. Assume  that 
$$
c_n\int_X v\wedge\bar v e^{-\phi-\psi} <\infty.
$$
Then there is a number $p>1$ such that
$$
c_n\int_X v\wedge\bar v e^{-\phi-p\psi} <\infty.
$$
\end{prop}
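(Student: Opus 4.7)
My plan is to deduce the proposition from Theorem~1.1 applied to a finite-rank vector bundle over the half-plane $\H=\{\Re\zeta>0\}$ whose fiber is $V:=H^0(X,K_X+L)$, equipped with an $S^1$-invariant family of $L^2$-norms whose weights interpolate between the hypothesis weight $e^{-\phi-\psi}$ and a strictly stronger weight $e^{-\phi-p_*\psi}$, where $p_*\in(1,2]$.

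Specifically, I would fix a smooth, concave, increasing function $p:[0,\infty)\to[1,p_*]$ with $p(0)=1$ and $\lim_{t\to\infty}p(t)=p_*$, and set, for $\zeta\in\H$,
\[
|v|_\zeta^2 \;:=\; c_n\int_X v\wedge\bar v\,e^{-\phi-p(\Re\zeta)\psi}.
\]
This is $S^1$-invariant, and the hypothesis reads $|v|_0^2<\infty$. The upper bound $p_*\leq 2$ is used crucially: because $2\psi$ is $\omega$-plurisubharmonic, $\phi+\lambda\psi$ has semipositive curvature for every $\lambda\in[0,2]$, being a convex combination of the psh weights $\phi$ and $\phi+2\psi$.

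The central analytic step is to show that $|v|_\zeta^2$ is positively curved on the trivial bundle $V\times\H$; the dual then gives a negatively curved $S^1$-invariant metric on $V^*\times\H$ of moderate growth (moderate growth coming for free, since $|v|_t^2$ is monotone increasing in $t$, and so the dual norm is bounded). By Berndtsson's theorem on the positivity of direct images, positive curvature reduces to the plurisubharmonicity of the joint weight $\Phi(\zeta,x):=\phi(x)+p(\Re\zeta)\psi(x)$ on $\H\times X$. Computing its complex Hessian, the $(x,\bar x)$-block is $\omega+p\cdot i\ddbar_x\psi\geq 0$ for $p\in[0,2]$, the $(\zeta,\bar\zeta)$-block is $(p''/4)\psi$, nonnegative when $p$ is concave and $\psi\leq 0$ (which one may assume by normalization), and the mixed entries are $(p'/2)\partial_x\psi$. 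The main obstacle is the Schur-complement inequality forced by these mixed terms, which is delicate in the presence of singular $\psi$; the natural approach is to regularize $\psi$ by $\psi_\varepsilon:=\max(\psi,-\varepsilon^{-1})$, establish positive curvature (and hence the proposition) for each $\varepsilon>0$, and then pass to the limit $\varepsilon\to 0$.

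Granting positive curvature, the conclusion follows from the openness of the $\alpha$-interval in Theorem~1.1 together with Fubini. The openness remark at the end of Section~1 asserts that $\{\alpha:\int_0^\infty|v|_t^2\,e^{t\alpha}\,dt<\infty\}$ is an open interval, and by Fubini
\[
\int_0^\infty|v|_t^2\,e^{t\alpha}\,dt \;=\; c_n\int_X v\wedge\bar v\,e^{-\phi(x)}\int_0^\infty e^{t\alpha-p(t)\psi(x)}\,dt.
\]
Since $p(t)\to p_*$ as $t\to\infty$, the inner integral is comparable to $e^{-p_*\psi(x)}/|\alpha|$ for $\alpha$ slightly negative, so finiteness of the outer integral is essentially equivalent to $\int_X v\wedge\bar v\,e^{-\phi-p_*\psi}<\infty$. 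The openness of the $\alpha$-interval, combined with the non-emptiness supplied by the hypothesis together with the moderate-growth bound, then delivers this finiteness, proving the proposition with $p=p_*>1$.
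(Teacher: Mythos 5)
Your overall strategy (an $S^1$-invariant family of $L^2$-metrics on $V\times\H$ plus the openness of the $\alpha$-interval from Theorem 1.1) is the right one, but the specific family $e^{-\phi-p(\Re\zeta)\psi}$ has two fatal gaps. First, the positivity step fails: for a non-constant $p$ the joint weight $\Phi(\zeta,x)=\phi(x)+p(\Re\zeta)\psi(x)$ is in general not plurisubharmonic, because the mixed term $(p'/2)\partial_x\psi$ must be absorbed by the diagonal blocks; the Schur condition amounts to a pointwise inequality of the type $|p''|\,|\psi|\,\bigl(\omega+p\,i\ddbar\psi\bigr)\geq p'^{2}\, i\partial\psi\wedge\dbar\psi$, which fails wherever $\psi$ is near $0$ with $\partial\psi\neq 0$, wherever $\psi$ is harmonic (nothing in the hypotheses bounds $|\partial\psi|^2/|\psi|$), and a fortiori at logarithmic poles where that ratio blows up. Regularizing by $\max(\psi,-1/\varepsilon)$ does not address this -- the obstruction is structural, not the singularity -- and even if curvature held for each $\varepsilon$, openness would only give some $p_\varepsilon>1$ with no lower bound as $\varepsilon\to0$, so nothing survives the limit. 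Second, even granting positivity, the family does not connect the hypothesis to Theorem 1.1: for $t>0$ the norms $|v|^2_t=c_n\int_X v\wedge\bar v\,e^{-\phi-p(t)\psi}$ are exactly the quantities whose finiteness is to be proved, so the hypothesis (finiteness at $t=0$ only) does not show that $\int_0^\infty|v|^2_t e^{t\alpha}dt$ is finite for any $\alpha$, i.e.\ that the interval whose openness you invoke is nonempty. Moreover your stated conclusion -- finiteness at a pre-chosen $p_*\in(1,2]$ -- is stronger than the proposition and false in general (finiteness at $p=1$ does not imply finiteness at $p=2$); in your setup there is no free parameter left on which openness can act, since $p_*$ was fixed in advance.

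The paper's construction avoids both problems by taking the weights $e^{-\phi-2\psi_s}$ with $\psi_s=\max(\psi+s,0)$, $\psi\le 0$. Then $\phi+2\psi_s$ is psh in $(x,\zeta)$ simply as the maximum of the two psh weights $\phi$ and $\phi+2\psi+2\Re\zeta$, so positive curvature of the bundle metric follows from the direct image theorem with no Hessian or Schur-complement computation; since $\psi_s\ge 0$ the norms are dominated by $c_n\int_X v\wedge\bar v\,e^{-\phi}$, hence automatically finite; and the calculus Lemma 3.2 converts $\int_0^\infty\|v\|^2_{-s}e^{ps}ds$ exactly into $C_p\,c_n\int_X v\wedge\bar v\,e^{-\phi-p\psi}$ (up to $-1/p$), so the open interval of Theorem 1.1 is literally the set of admissible exponents $p<2$, which contains $p=1$ by hypothesis, and openness yields some $p>1$. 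To salvage your interpolation idea you would have to replace $p(t)\psi$ by a weight that is psh in $(x,\zeta)$ by construction (a max of psh functions) and whose $t$-integral reconstructs $e^{-p\psi}$ -- which is precisely what the paper does.
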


The proof of the proposition is mimicked on the arguments in \cite{Berndtsson-open}. The first ingredient is the following calculus lemma which can be proved by direct computation.
\begin{lma} Let $x\leq 0$ and $0<p<2$. Then 
$$
\int_0^\infty e^{ps} e^{-2\max(x+s,0)} ds +1/p= C_{p} e^{-px}.
$$
\end{lma}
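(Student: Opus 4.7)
The plan is to evaluate the integral directly by splitting at the point where the piecewise-linear exponent changes character. Since $x \leq 0$, we have $-x \geq 0$, and $\max(x+s,0)=0$ on $[0,-x]$ while $\max(x+s,0)=x+s$ on $[-x,\infty)$. I would therefore write
$$
\int_0^\infty e^{ps} e^{-2\max(x+s,0)}\,ds = \int_0^{-x} e^{ps}\,ds + e^{-2x}\int_{-x}^\infty e^{(p-2)s}\,ds.
$$

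The first integral is elementary and equals $(e^{-px}-1)/p$. For the second, the assumption $0<p<2$ makes $p-2<0$, so the improper integral converges and evaluates to $e^{-(p-2)x}/(2-p)$. Combining with the prefactor $e^{-2x}$ yields $e^{-px}/(2-p)$. Summing the two pieces and adding the extra $1/p$ cancels the $-1/p$ from the first integral:
$$
\frac{e^{-px}-1}{p} + \frac{e^{-px}}{2-p} + \frac{1}{p} = e^{-px}\left(\frac{1}{p}+\frac{1}{2-p}\right) = \frac{2 e^{-px}}{p(2-p)}.
$$

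Hence the identity holds with $C_p = 2/(p(2-p))$. There is no genuine obstacle here; the only point requiring care is the convergence of the tail integral, which uses precisely the hypothesis $p<2$, and the cancellation of the constant $-1/p$, which explains why the $+1/p$ is added on the left.
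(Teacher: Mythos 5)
Your computation is correct and is exactly the "direct computation" the paper alludes to: splitting the integral at $s=-x$, using $p<2$ for convergence of the tail, and observing the cancellation with the added $1/p$, giving $C_p=2/(p(2-p))$. Nothing further is needed.
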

We next let $W:=H^0(X, K_X+L)$ and let $F=W\times\H$ be the trivial vector bundle over the right half plane with fiber $W$. We equip $W$ with the norms
$$
\|v\|^2_{-s}:=c_n\int_X v\wedge\bar v e^{-\phi-2\psi_s},
$$
where $\psi_s=\max(\psi+s,0)$. We normalize so that $\psi\leq 0$. Since $\phi+2\psi_s$ is plurisubharmonic on $X\times\H$ ( $s=\Re \zeta$) it follows from the results in \cite{Berndtsson} that $\|\cdot\|_{-\Re\zeta}$ defines a positively curved metric on our bundle $F$. By Lemma 3.2  we have that
$$
C_P c_n\int_X v\wedge\bar v e^{-\phi-p\psi}=\int_0^\infty \|v\|^2_{-s} e^{ps}ds -1/p.
$$

By Theorem 1.1,  the set of $p<2$ such that the right hand side here is finite,  is open, which proves Proposition 3.1.

\subsection{Local strong openness.}

One would perhaps  wish for a version of this proof for the local strong openess. We would then have a negative plurisubharmonic function $\psi$ in the unit ball, $B$, and $v$ a holomorphic function in the ball,  and study the integrability of $|v|^2 e^{-p\psi}$ over smaller balls. Approaching this problem in the same way we put $W=A^2(B)$, the Bergman space of square integrable holomorphic functions in the ball,
$$
\int_B |v|^2 <\infty.
$$
Given $\psi$, we define $\psi_s=\max(\psi+s,0)$ as before and introduce the norms
$$
\|v\|^2_{-s}:=\int_{B/2} |v|^2 e^{-2\psi_s}
$$
on $W$. One could then continue in the same way, given an extension of Theorem 1.1 to infinite dimensional spaces. This poses some obvious problems, but there is at least one feature of this set up that may be helpful.

We define $V$ as the dual of $W$ and  consider the norms $\|u\|_t$ on $V$ dual to $\|\cdot\|_{-t}$. We can then define the negative Lelong numbers $\alpha(u)$ as before, but probably the possible values of $\alpha$ will no longer form a finite set. They will however satisfy a {\it Noetherian property}: All decreasing sequences $V_{\alpha_j}$ are stationary for $j$ large. This means that if $\alpha=\alpha(u)$ for some $u$ in $V$, then there is some $\epsilon>0$ such that $V_{\alpha+\epsilon}=V_\alpha$.

  To see this, we note that all elements $u$ in $V$ with $\|u\|_s<\infty$ are bounded as functionals on $W$ by the square norm of a function $v$ on $B/2$. They therefore extend as linear functionals on the space of functions that are holomorphic only in a neighbourhood of the closure of $B/2$. If $h$ is holomorphic in a neighbourhood of $\bar B/2$  we can then define $hu$ by duality. It  follows that $\|hu\|_s\leq \sup_{B/2}|h| \|u\|^2_s$, so the subspaces $V_\alpha$ are stable under such multiplication. Hence the space of functions holomorphic in a neighbourhood of the closure of $B/2$ that are annihilated by $V_\alpha$ is a module over the ring $H(\bar B/2)$, and our claim follows from the Noetherian property of such modules. 

  The statement of Theorem 1.1 still makes sense in the infinite rank case, if we add the assumption that the $V_\alpha$:s have the Noetherian property. If Theorem 1.1   holds in this setting, strong openness can be proved in the same way as we have proved the global case. 
\section{ Example 2: The $L^2$-extension problem for general ideals.}

First we recall the classical setting of the $L^2$-extension problem for domains in $\C^n$. We let $D$ be a bounded pseudoconvex domain in $\C^n$, and $\phi$ a plurisubharmonic function in $D$. We also suppose given a linear complex subspace, $M$,  of $\C^n$ intersecting $D$. The next theorem is one version of the classical Ohsawa-Takegoshi extension theorem.
\begin{thm} There is a constant $C$, depending only on the diameter of $D$ such that for any holomorphic function $f$ on $M$ there is a holomorphic function $F$ on $D$ extending $f$ which satisfies the estimate
$$
\int_D |F|^2 e^{-\phi}\leq C \int_M |f|^2 e^{-\phi}
$$
(where on both sides we integrate with respect to Lebesgue measure). 
\end{thm}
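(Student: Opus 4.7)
The plan is to prove this in the same spirit as Example~1, via a one-parameter family of weighted $L^2$ norms on a fixed Bergman space, Berndtsson's theorem on positivity of direct images, and an extremal-function extraction of the extension. By iterating on codimension it suffices to treat $M=\{z_n=0\}\cap D$, and after a rescaling depending only on the diameter of $D$ I may assume $|z_n|\leq 1$ on $D$. For $s\geq 0$ define on $W=A^2(D,e^{-\phi})$ the norms
\[
\|F\|^2_{-s}:=\int_D|F|^2 e^{-\phi-2\psi_s},\qquad \psi_s(z):=\max(\log|z_n|+s,0).
\]
Since $\phi(z)+2\psi_{\Re\zeta}(z)$ is plurisubharmonic on $D\times\H$, the results cited in Section~3 give that $\|\cdot\|_{-\Re\zeta}$ is a positively curved Hermitian metric on the trivial bundle $W\times\H$.

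Next I would apply Lemma~3.2 pointwise with $x=\log|z_n|\leq 0$ and $0<p<2$, and integrate $|F|^2 e^{-\phi}$ over $D$:
\[
\int_0^\infty e^{ps}\|F\|^2_{-s}\,ds+\tfrac{1}{p}\int_D|F|^2 e^{-\phi}=C_p\int_D|F|^2 e^{-\phi}\,|z_n|^{-p}.
\]
A Fubini and polar-coordinates computation in the $z_n$-direction shows that as $p\to 2^-$ the right-hand side, after multiplication by $(2-p)$, converges to a universal constant times $\int_M|f|^2 e^{-\phi}$ whenever $F$ is a holomorphic extension of $f$. This identifies $p=2$ as the critical exponent and links the integrability of $e^{ps}\|F\|^2_{-s}$ at the boundary to the $L^2$-norm of $f$ on $M$.

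The extension itself is then produced by extremal functions: for each $s>0$, let $F_s$ be the minimizer of $\|\cdot\|_{-s}^2$ over holomorphic extensions of $f$ to $D$. Positivity of direct images applied to this affine family should yield that $s\mapsto\log\|F_s\|^2_{-s}$ is convex. Combining convexity with the large-$s$ asymptotic $\|F_s\|^2_{-s}e^{2s}\to c\int_M|f|^2 e^{-\phi}$ then forces $\|F_0\|^2_0=\int_D|F_0|^2 e^{-\phi}\leq C\int_M|f|^2 e^{-\phi}$ with $C$ depending only on the diameter of $D$, and $F_0$ is the desired extension. (Equivalently one could phrase this in the dual picture and appeal to an infinite-rank version of Theorem~1.1 at $\alpha=2$, as flagged by the author in Section~3.1.)

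I expect the principal obstacle to be the convexity of $s\mapsto\log\|F_s\|^2_{-s}$ for the \emph{extremal} extension. Since the class of holomorphic extensions is an affine rather than linear subspace of $W$, Berndtsson's positivity does not directly yield convexity, and one must argue via an energy-minimization comparison or by coupling the problem to an auxiliary disk parameter in the Berndtsson--Lempert style. The large-$s$ asymptotic is a secondary delicate point: it requires showing that the minimizers $F_s$ concentrate appropriately near $M$. Once both issues are addressed, the $L^2$-extension estimate with the claimed dependence on the diameter of $D$ falls out.
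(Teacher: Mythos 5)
There is a genuine gap, and it sits exactly where you flag it: the convexity of $s\mapsto\log\|F_s\|^2_{-s}$ for the extremal extensions is not only unproven in your proposal, it is not the statement the argument should aim for. The minimal norm over the affine family of extensions is precisely the quotient norm of (the class of) $f$ in $H(D)/J(M)\cong H(M)$, and the direct-image positivity of \cite{Berndtsson} does descend to this quotient bundle, so the family $\|f\|^2_{-t}$ of minimal-extension norms defines a \emph{positively} curved metric on $H(M)\times\H$. But positive curvature does not make $t\mapsto\log\|f\|^2_{-t}$ convex for a fixed $f$ (log-norms of fixed sections are plurisubharmonic only for negatively curved metrics), so no ``energy-minimization comparison'' will produce the convexity you want, and your concluding step has no support. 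What the paper does instead is dualize: for a fixed functional $u$ in the dual space, the dual norms $\|u\|^2_t$ are negatively curved and depend only on $t=\Re\zeta$, hence $\log\|u\|^2_t$ is convex; one then verifies the growth bound $\log\|u\|^2_t-kt\leq C$ (with $k$ the codimension, or $2$ in your hypersurface normalization), and a convex function bounded above on $[0,\infty)$ is decreasing. Monotonicity, unlike convexity, passes back through duality: $\|u\|^2_te^{-kt}$ decreasing for all $u$ gives $\|f\|^2_{-t}e^{kt}$ increasing, whence $\|f\|^2_0\leq\lim_{t\to\infty}\|f\|^2_{-t}e^{kt}$, and the limit is estimated by an absolute constant times $\int_M|f|^2e^{-\phi}$ (first for $\phi$ smooth up to the boundary, then by approximation). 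Without this passage to the quotient bundle and its dual, your central inequality $\|F_0\|^2_0\leq C\int_M|f|^2e^{-\phi}$ is not established.

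Two secondary points. First, your family of weights $\psi_s=\max(\log|z_n|+s,0)$ on the fixed domain (Example~1 style) versus the paper's sublevel domains $D_t=\{G<-t\}$ (Example~2 style) is an inessential difference; either scale of norms works once the dual monotonicity is in place, and your Lemma~3.2 computation linking the $p\to2^-$ limit to $\int_M|f|^2e^{-\phi}$ is the right kind of bookkeeping for the endpoint asymptotics. Second, the reduction to $M=\{z_n=0\}$ by induction on codimension is harmless for the qualitative statement, but note that the paper handles arbitrary codimension directly by taking $G=\log\sum|l_j|^2-C$, which keeps the constant's dependence transparent; if you iterate, you should say explicitly that the constant you obtain depends on the diameter (and the ambient dimension through the number of iterations) and check that each intermediate extension is taken with the same weight $\phi$.
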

Next we sketch a proof of this theorem along the lines of \cite{Berndtsson-Lempert}. Let $G(z)$ be a plurisubharmonic function with logarithmic singularities on $M$, such that $G\leq 0$ in $D$. If $M$ is defined by the linear equations $l_j(z)=0$ for $j= 1, 2 ,..k$ we may take $G(z)=\log \sum |l_j|^2 -C$ where $C$ is a suitable constant. We next define the subdomains of $D$, 
$$
D_t=\{z\in D; G(z)< -t\}
$$
for $t\geq 0$. Then all the domains $D_t$ are pseudoconvex and moreover the domain
$$
\D:=\{ (z, \zeta)\in D\times\H;  G(z)<-\Re\zeta\}
$$
is a pseudoconvex subdomain of $D\times\H$. Note that the domains $D_t$ are just the vertical slices of $\D$ in the sense that
$$
D_t=\{z; (z,\zeta)\in \D\}
$$
if $t=\Re \zeta$. 

It is well known that any function in $H(M)$ ( i e holomorphic on $M$) can be extended to $D$ as a holomorphic function. If we let $J(M)$ be the ideal of functions in $H(D)$ that vanish on $M$, this means that
$$
H(M) = H(D)/ J(M), 
$$
and of course also $H(M)=H(D_t)/J(M)$. This means that we get a scale of norms on $H(M)$ as the quotient norms
$$
\|f\|^2_{-t}=\min_F \int_{D_t} |F|^2 e^{-\phi},
$$
where the minimum is taken over all $F$ that extend $f$. The Ohsawa Takegoshi theorem amounts to an estimate of $\|f\|^2_0$ and the idea (first occuring in \cite{Blocki}) is to study the variation of the norms as $t$ varies. 

The proof is based on the following lemma, which is a consequence of the main result in \cite{Berndtsson}.
\begin{lma} The norms $\|f\|^2_{-\Re \zeta}$ define a vector bundle metric of non negative curvature on the vector bundle $F= H(M)\times\H$.
\end{lma}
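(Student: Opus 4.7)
The plan is to realise $F$ as a Hilbert quotient of a larger Bergman-type bundle whose positive curvature is supplied directly by the main theorem of \cite{Berndtsson}, and then to appeal to the general principle that Hermitian quotients inherit semipositive curvature.

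First I would take as total space
\[
\D=\{(z,\zeta)\in D\times\H\ :\ G(z)+\Re\zeta<0\}.
\]
Since $G$ is plurisubharmonic in $z$ and $\Re\zeta$ is pluriharmonic in $\zeta$, the defining function is plurisubharmonic, so $\D$ is pseudoconvex with vertical slices $D_t$ at $\{\Re\zeta=t\}$. Pulling $\phi$ back to $\D$ gives a plurisubharmonic weight independent of $\zeta$. The main theorem of \cite{Berndtsson}, applied to $\D\to\H$, then produces a Hilbert bundle $\E\to\H$ whose fiber at $\zeta$ is the weighted Bergman space
\[
\E_\zeta=A^2(D_{\Re\zeta},e^{-\phi}),
\]
equipped with a canonical $L^2$-metric of Nakano semipositive curvature (on the one-dimensional base $\H$ the same thing as Griffiths semipositive).

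Next I would identify $F$ with a quotient of $\E$. Let $\mathcal J_\zeta\subset\E_\zeta$ be the closed subspace of functions vanishing on $M\cap D_{\Re\zeta}$. Restriction to $M$ gives a map $\E_\zeta\to H(M)$ with kernel $\mathcal J_\zeta$ (on elements of $H(M)$ with no $L^2$ extension the quotient norm is $+\infty$ anyway, matching the defining infimum). The quotient Hilbert norm on $\E_\zeta/\mathcal J_\zeta$ is, by definition,
\[
\inf_{F|_M=f}\int_{D_{\Re\zeta}}|F|^2 e^{-\phi}=\|f\|^2_{-\Re\zeta},
\]
the infimum being attained at the extension orthogonal to $\mathcal J_\zeta$. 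Assembling the fibers yields a short exact sequence $0\to\mathcal J\to\E\to F\to 0$ of Hilbert bundles over $\H$ in which $F$ carries exactly the metric of the lemma.

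Finally I would invoke the general fact that a Hermitian quotient of a Griffiths semipositive (Hilbert) bundle is Griffiths semipositive: the second fundamental form contributes a manifestly non-negative correction, so the curvature of $F$ dominates the restriction of the curvature of $\E$ to $F$, which is already $\geq 0$. This gives the desired semipositive curvature on $F$. The main obstacle I foresee is not conceptual but technical: one has to verify that $\E$, $\mathcal J$ and the quotient really form smooth Hilbert bundles over $\H$ and that \cite{Berndtsson} applies to the possibly singular weight $\phi$ and to the merely Lipschitz boundary of $\D$. Both points are handled in the standard way by exhausting $\D$ by smoothly bounded strictly pseudoconvex subdomains with smooth strictly psh weights and passing to the limit, using that semipositive curvature is preserved under decreasing limits of bundle metrics.
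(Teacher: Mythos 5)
Your derivation is essentially the one the paper intends: the paper gives no argument beyond citing the main result of \cite{Berndtsson}, and your route (Nakano/Griffiths semipositivity of the weighted Bergman family over the pseudoconvex domain $\D$, passed to the quotient norms on $H(M)$, with the standard approximation by smooth strictly pseudoconvex subdomains and smooth weights) is exactly how that citation is meant to be used, as in \cite{Berndtsson-Lempert}. The technical points you flag (that the family of spaces $A^2(D_{\Re\zeta},e^{-\phi})$ with varying fibers must be handled with care, while the quotient $F=H(M)\times\H$ is a genuine trivial bundle, and that positivity survives the limiting process in the singular-metric sense) are the right ones and are resolved in the standard way you indicate, so the proposal is correct.
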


To study the norms we look first at the dual norms on the dual bundle $E$ of $F$. By the lemma this is  a negatively curved metric and we use the following consequence of the lemma. 
\begin{cor} Let $u$ be an element of the dual space of $H(M)$ which has finite norm for $\|\cdot\|_0$.  Then  $u$ is bounded for all the norms $\|\cdot\|_t$ and if we denote by $\|u\|^2_t$ the dual norms, the function 
$$
k(t):= \log\|u\|^2_t -kt
$$
is (convex and) decreasing.
\end{cor}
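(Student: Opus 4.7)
The plan is to apply the principle ``convex and bounded above implies decreasing'' to $K(t) := \log\|u\|_t^2 - kt$, using Lemma 4.2 for the convexity and a local analysis at $M$ for the boundedness.

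\textbf{Step 1 (Convexity).} By Lemma 4.2 the primal norms $\|\cdot\|_{-\Re\zeta}$ define a positively curved metric on $F = H(M) \times \H$, so the dual metric $\|\cdot\|_{\Re\zeta}$ on $E$ is negatively curved. Because it depends only on $t = \Re\zeta$, the discussion at the start of Section~2 shows that $t \mapsto \log\|u\|_t^2$ is convex on its (a priori unknown) domain of finiteness. Subtracting the affine term $kt$ preserves convexity, so $K$ is convex there.

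\textbf{Step 2 (Exponential upper bound).} The goal is $\|u\|_t^2 \leq C e^{kt}\|u\|_0^2$; by duality this is the pointwise inequality $\|f\|_{-t}^2 \geq c\, e^{-kt}\|f\|_0^2$ for every $f \in H(M)$. The codimension $k$ enters through the fact that $G = \log \sum|l_j|^2 - C$ is locally $\log|w|^2 + O(1)$ at every point of $M$, where $w \in \C^k$ is a coordinate transverse to $M$; consequently $D_t$ is comparable near $M$ to the tube $\{|w|^2 \lesssim e^{-t}\}$, whose transverse $2k$-volume is of order $e^{-kt}$. A local mean-value argument on transverse polydiscs of radius $\sim e^{-t/2}$, together with the observation that any two extensions of the same $f$ differ by an element of $J(M)$ (which vanishes on $M$ and hence contributes only lower-order corrections on the thin tube), should yield this bound. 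This is the same essential ingredient as in \cite{Berndtsson-Lempert}.

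\textbf{Step 3 (Deduction).} A convex function on $[0,\infty)$ that is bounded above must be nonincreasing: a positive right derivative at any point would, by monotonicity of chord slopes, force the function to $+\infty$. Applied to $K$, this gives the decreasing statement. Reading $K(t) \leq K(0)$ as $\|u\|_t^2 \leq e^{kt}\|u\|_0^2$ shows in particular that $\|u\|_t < \infty$ for every $t \geq 0$, which settles the first assertion of the corollary.

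I expect Step 2 to be the main obstacle. Invoking Ohsawa--Takegoshi here is not allowed, since the whole construction is meant to recover it, so the lower bound on $\|f\|_{-t}^2$ must be extracted from a purely local transverse-integration argument, with the exponential rate $e^{-kt}$ matching the codimension of $M$ in $\C^n$. The correct uniform-in-$f$ constant in $\|f\|_{-t}^2 \geq c\, e^{-kt} \|f\|_0^2$ is the delicate point; everything else is soft.
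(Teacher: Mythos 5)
Your Steps 1 and 3 are exactly the paper's proof: the paper obtains convexity of $\log\|u\|^2_t$ from Lemma 4.2 plus the fact that the metric depends only on $t=\Re\zeta$ (the discussion in the introduction/Section 2), and then concludes "decreasing" from "convex and bounded above", the boundedness being asserted with the phrase "one can also verify". So the structure matches; the only substantive question is your Step 2, and there the proposal has a genuine gap which you partly acknowledge. The duality reduction you set up requires the uniform inequality $\|f\|^2_{-t}\geq c\,e^{-kt}\|f\|^2_{-0}$, where $\|f\|_{-0}$ is the \emph{quotient} (minimal-extension) norm over all of $D$, because for an arbitrary $u$ with $\|u\|_0<\infty$ the only control on $|u(f)|$ is $|u(f)|\leq\|u\|_0\|f\|_{-0}$. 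But the purely local transverse mean-value argument on polydiscs of radius $\sim e^{-t/2}$ gives something different: for any extension $F$ on $D_t$ one gets $|f(z)|^2\leq Ce^{kt}\int_{P_z}|F|^2$ at points $z\in M$ whose transverse polydisc $P_z$ fits inside $D_t$, hence $\|f\|^2_{-t}\geq c\,e^{-kt}\int_{M\cap D'}|f|^2e^{-\phi}$ with $D'$ compactly contained in $D$ -- a lower bound by the \emph{intrinsic} $L^2$-norm of $f$ on (part of) $M$, not by $\|f\|_{-0}$. Passing from the intrinsic norm to $\|f\|_{-0}$ is precisely an $L^2$-extension estimate $\|f\|^2_{-0}\leq C\int_M|f|^2e^{-\phi}$, i.e. a (non-sharp) Ohsawa--Takegoshi theorem, which you have forbidden yourself from invoking; so the "purely local" program cannot close as described. (Your remark that two extensions differ by an element of $J(M)$ contributing "lower-order corrections" is not the mechanism at all: the sub-mean-value inequality applies directly to whichever extension realizes, or competes for, the minimum, and no comparison of extensions is needed.)

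What the local argument does prove is boundedness of $k(t)$ for functionals $u$ that are represented by pairing against $L^2$ data supported on $M$ (then $|u(f)|$ is controlled by the intrinsic norm of $f$ on a compact part of $M$, and $\|u\|^2_t\leq Ce^{kt}$ follows); these are the functionals actually used in the application, and this is presumably what the paper's "one can also verify" refers to. For a completely arbitrary $u$ bounded in $\|\cdot\|_0$, the asserted bound $\|u\|^2_t\leq Ce^{kt}\|u\|^2_0$ is equivalent to the uniform inequality above and hence essentially to a crude OT estimate for $(D,M,\phi)$; if you want to prove the corollary in that generality you must either import such a known non-sharp extension theorem (which is not circular if the aim is only the sharp constant) or restrict the class of $u$. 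As written, Step 2 is the missing idea, not merely a delicate constant.
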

\begin{proof} The convexity of $k$ follows from the discussion in the introduction: Since $\|u\|^2_{\Re\zeta} $ has negative curvature and only depends on $\Re\zeta$, $\log\|u\|^2_t$ is convex. In the situation at hand one can also verify that $k(t)$ is bounded from above as $t$ goes to infinity, and therefore the convexity implies that $k$ is decreasing.
\end{proof} 

Thus we have found that $\|u\|^2_t e^{-kt}$ is decreasing and it follows that the dual norms
$$
\|f\|^2_{-t} e^{kt}
$$
are increasing. Hence
$$
\|f\|^2_0\leq \lim_{t\to\infty}\|f\|^2_t e^{kt}.
$$
If $\phi$ is smooth and extends to a neighbourhood of $\bar D$ it is not hard to estimate the limit in the right hand side by an absolute  constant times 
$$
\int_{M\cap D} |f|^2 e^{-\phi},
$$
so we get Theorem 4.1 under these additional assumptions. But, since the constant in the estimate is absolute, the general case follows by approximation. 

This proof suggests looking at a more general situation, where  we consider more general ideals, $J$,  than $J(M)$. This problem has been studied in \cite{Popovici} and recently in \cite{Demailly}.( See also the more recent articles \cite{Hosono} and \cite{McNeal-Varolin} that appeared after this paper was submitted to the JGEA.)  The simplest such situation is when $D=\Delta$ is the unit disk in $\C$, and $J=(z^{n+1})$. Then $H(\Delta)/(z^{n+1})$ is the space of  jets of order  $n$ of holomorhic functions at the origin and we arrive at the problem to estimate the minimal weighted $L^2$-norm of all holomorphic functions $f$ in the disk with $f(0),f'(0), ...f^{(n)}(0)$ prescribed. In the more general situation, the ideal $J$ has some zero locus $M$ and one wants to extend functions on $M$ together with their jets of different orders with $L^2$-estimates.

This seems to be a rather formidable problem, but it is clear that the general lines of the argument described above still apply. We can still find a plurisubharmonic  function $G$ with logarithmic singularities on $M$ as 
$$
G(z)=\log \sum |g_j|^2 -C
$$
if we assume the ideal $J=(g_1,...g_m)$ is finitely generated. This gives again domains $D_t$ and dual vector bundles $F$ and $E$, of positive and negative curvature respectively.

The main difference, as compared to the situation in Theorem 4.1 is that there is no longer one fixed growth order. In the classical situation, we have that
$$
\|u\|^2_t \sim e^{kt}
$$
for essentially all $u$ in the dual space (at least for a dense subspace), but in the more general case, different $u$:s have different growth. In the model case of a 'fat'  point in the disk we have $N$ linearly independent vectors in the dual of $H(\Delta)/(z^{n+1})$, the sequence of derivatives of Dirac measures at the origin, $u_j=\delta_o^{(j)}$ for $0\leq j\leq n$. It is easy to verify that
$$
\|u_j\|^2_t\sim e^{jt},
$$

Thus we are precisely in the  situation in Theorem 1.1. We have a negatively curved vector bundle over $\H$, $E$, and different 'negative Lelong numbers' $\alpha$ for different vectors in the fiber of $E$. As in Theorem 1.1, let us denote by $V$ the fiber of $E$, i e the dual of $H(D)/J$. Admittedly, this is not in general of finite dimension, but if we assume that the zero locus of $J$ consist of one point (as in our model case), it has finite dimension.  We will therefore now restrict to this case, even though the general set up of the problem still applies, even for bundles of infinite rank. 

We therefore get a filtration of $V$, $(V_{\alpha_j})$ and a dual filtration $(F_{\alpha_j})$ of $V^*=H(D)/J$. 
It follows, as in the corollary, that if $u\in V_{\alpha_j}$, then 
$$
\|u\|^2_t e^{-t\alpha_j}
$$
is decreasing. In particular, if the jumping numbers are $\alpha_1< ...\alpha_n$ we always have that $\|u\|^2_te^{-t\alpha_n}$ is decreasing,  and hence $\|v\|^2_{-t}e^{t\alpha_n}$ is increasing for any $v$ in $F$. Hence we get the estimate
$$
\|v\|^2_{-0}\leq \lim_{t\to \infty}\|v\|^2_{-t}e^{t\alpha_n}.
$$
In general however, the right hand side here will  be infinite. If it is finite, it is clear that $v\in F_{\alpha_{n-1}}$ (compare the end of the proof of Theorem 1.1). In the  model case in the disk, $\alpha_j=j$ and the condition that
$v\in F_{\alpha_{n-1}}$ means that  all derivatives up to order $n-1$ of $v$ vanish at the origin. In this case, the right hand side of our estimate is indeed finite, and the estimate is sharp.

In order to proceed, we next look at  $V_{\alpha_{n-1}}$. Then
$$
\|u\|^2_t e^{-t\alpha_{n-1}}
$$
is decreasing for all $u\in V_{\alpha_{n-1}}$. The dual of this space is $F^{(1)}:=F/F_{\alpha_{n-1}}$, so
$$
\|v\|^2_{-t,1} e^{t\alpha_{n-1}}
$$
is increasing for $v\in F^{(1)}$ and
$$
\|v\|^2_{-0,1}\leq \lim_{t\to\infty}\|v\|^2_{-t,1} e^{t\alpha_{n-1}}
$$
(where $\|\cdot\|_{-t,1}$ are the quotient norms). In particular
$$
\|v\|^2_{-0}\leq \lim_{t\to\infty}\|v\|^2_{-t,1} e^{t\alpha_{n-1}}
$$
if $v$ is orthogonal to $F_{\alpha_{n-1}}$ for the norm $\|\cdot\|_{-0}$. Now, $F_{\alpha_{n-1}}$ again defines an ideal in the space of holomorphic functions in $D$; $p^{-1}(F_{\alpha_{n-1}})$, where $p:H(D)\to H(D)/J$ is the quotient map.  Hence  we have in reality just repeated the first part of the argument with $J$ replaced by $p^{-1}(F_{\alpha_{n-1}})$ , which in the model case means that we have replaced $n$ by $n-1$. Continuing in this way, we decompose $v$ in $F$,  $v=v_{n-1}+v_{n-2} +...$, where
$v_{j}\in F_{\alpha_j}$ and is orthogonal to $F_{\alpha_{j+1}}$, and can estimate $\|v\|^2_0=\|v_{n-1}\|^2_0 +...$ by the procedure above.

The main drawback with this is that the estimate we obtain depends on the orthogonal decomposition. Let us illustrate this with the model example, for $n=1$. If $f$ is holomorphic in the disk, the decomposition is $f=f_0+f_1$, where
$$
f_0=f(0)K_\phi(z,0)/K_\phi(0,0), \quad f_1=f-f(0)K_\phi(z,0)/K\phi(z,0),
$$
with $K_\phi(z,w)$ the Bergman kernel. The estimate we get is then
$$
\|f\|^2_0\leq \pi (|f_0(0)|^2 +(1/2)|f_1'(0)|^2) e^{-\phi(0)}=
$$
$$
\pi(|f(0)|^2 +(1/2)|f'(0)-f(0)(\partial\log K_\phi(z,z)/\partial z)|_{z=0}|^2)e^{-\phi(0)}.
$$

All of this can be compared with the recent work  of Demailly, \cite{Demailly}, who treats the $L^2$-extension problem for general ideals in a very general setting on compact manifolds. His work shows the existence of $L^2$-extensions in very general circumstances, but shares the feature with the discussion above that one gets an explicit estimate only for functions with the maximal vanishing order, corresponding to the space $F_{\alpha_{n-1}}$ here.

 \def\listing#1#2#3{{\sc #1}:\ {\it #2}, \ #3.}


\begin{thebibliography}{9999}


  \bibitem{Berman-Keller}\listing{Berman, B. and Keller, J.}{Bergman geodesics}{Complex Monge-Amp\`ere equations and geodesics in the space of K\"ahler metrics}{ Lecture notes in Math. 2039, Springer (2012), pp 283-302}
\bibitem{Berndtsson}\listing{Berndtsson, B}{Curvature of vector bundles
    associated to holomorphic fibrations  }{Ann Math 169 2009, pp
    531-560 }
\bibitem{Berndtsson-open}\listing{Berndtsson B}{The openness conjecture and complex Brunn-Minkowski inequalities}{In: Complex Geometry and Dynamics. Vol 10 of the series of Abel Symposia, Springer Verlag 2015, pp 29-44}
\bibitem{Berndtsson-Lempert}\listing{Berndtsson, B and Lempert, L}{A proof of the Ohsawa-Takegoshi theorem with sharp estimates }{ J. Math. Soc. Japan 68 (2016), pp. 1461-1472}

\bibitem{Berndtsson-Paun}\listing{Berndtsson, B and Paun, M}{Bergman kernels and the pseudoeffectivity of relative canonical bundles.}{ Duke  Math. J. 145 ,2008 no. 2, 341-378.}  

\bibitem{Blocki}\listing{Blocki, Z}{Bergman kernel and pluripotential theory}{Analysis, complex geometry, and mathematical physics: in honor of Duong H. Phong,pp 1-10,
Contemp. Math., 644, Amer. Math. Soc., Providence, RI, 2015. }

  

    \bibitem{de Cataldo}\listing{de Cataldo, M A}{Singular Hermitian metrics on vector bundles.}{ J. Reine Angew. Math. 502 (1998), 93-122.}

\bibitem{Coifman-Semmes}\listing{Coifman, R. and Semmes, S.}{Interpolation of Banach spaces, Perron processes and Yang-Mills}{Amer. J. Math. 115 (1993), pp. 
243-278}    
    \bibitem{Demailly}\listing{Demailly, J-P}{ Extension of holomorphic functions defined on non reduced analytic subvarieties}{Arxiv 1510.05230}

      \bibitem{Guan-Zhou}\listing{Guan, Q and Zhou, X}{ A proof of Demailly's strong openness conjecture}{ Ann Math 182 (2015), pp 605-616}

      \bibitem{Hacon-Schnell}\listing{Hacon, C., Popa, M. and Schnell, C.}{Algebraic fiber spaces over analytic varieties: around a recent theorem by Cao and Paun}{Arxiv 1611.08768}

        \bibitem{Hosono}\listing{ Hosono, G.}{The optimal jet $L^2$ extension of Ohsawa-Takegoshi type.}{ Arxiv: 1706:08725}

        \bibitem{Lempert}\listing{Lempert, L.}{ A maximum principle for hermitian (and other) metrics}{ Proc. Amer. Math. Soc. 143 (2015), pp. 2193-2200}

          \bibitem{McNeal-Varolin}\listing{ McNeal, J. and Varolin, D.}{Extension of Jets with $L^2$-estimates, and an application}{Arxiv:1707.04483} 
        
      \bibitem{Paun-Takayama}\listing{Paun, M and Takayama, S}{ Positivity of relative pluricanonical bundles and their images}{arXiv:1409.5504}

        \bibitem{Popovici}\listing{Popovici, D.}{ $L^2$-extension for jets of holomorphic sections of a Hermitian line bundle.}{Nagoya Math J 180 (2005), pp. 1-34}
        
        \bibitem{Raufi}\listing{Raufi, H.}{Singular metrics on holomorphic vetor bundles.}{Ark. Mat. 53 (2015), pp. 359-382}

\end{thebibliography}
\end{document}